\title {\bf A nonlinear PPH-type reconstruction based on equilateral triangles.}
\author{S. Amat\thanks{Departamento de Matem\'atica Aplicada y Estad\'{\i}stica.
 Universidad Polit\'ecnica de Cartagena (Spain).
 e-mail:{\tt sergio.amat@upct.es.} The first four authors have been supported through the Proyecto financiado
por la Comunidad Aut\'onoma de la Regi\'on de Murcia a trav\'es de la convocatoria de Ayudas a proyectos
para el desarrollo de investigaci\'on cient\'{\i}fica y t\'ecnica por grupos competitivos, incluida en el Programa
Regional de Fomento de la Investigaci\'on Cient\'{\i}fica y T\'ecnica (Plan de Actuaci\'on 2018) de la Fundaci\'on
S\'eneca-Agencia de Ciencia y Tecnolog\'{\i}a de la Regi\'on de Murcia 20928/PI/18 and by the Spanish national research project PID2019-108336GB-I00.} \and P. Ortiz\thanks{
 Departamento de Matem\'atica Aplicada y Estad\'{\i}stica.
 Universidad Polit\'ecnica de Cartagena (Spain).
 e-mail:{\tt portiz@navantia.es.}} \and
 J. Ruiz\thanks{
 Departamento de Matem\'atica Aplicada y Estad\'{\i}stica.
 Universidad Polit\'ecnica de Cartagena (Spain).
e-mail:{\tt juan.ruiz@upct.es.}}
 \and
 J.C.Trillo\thanks{
 Departamento de Matem\'atica Aplicada y Estad\'{\i}stica.
 Universidad Polit\'ecnica de Cartagena (Spain).
e-mail:{\tt jc.trillo@upct.es.}} \and
D. F. Ya\~nez \thanks{
 Departamento de Matem\'aticas.
 Universidad de Valencia (Spain).
e-mail:{\tt dionisio.yanez@uv.es.}}
}
\begin{document}

\maketitle

\newtheorem{proposition}{Proposition}
\newtheorem{lemma}{Lemma}
\newtheorem{definition}{Definition}
\newtheorem{theorem}{Theorem}
\newtheorem{corollary}{Corollary}
\newtheorem{remark}{Remark}

\begin{abstract}
In this paper we introduce a new nonlinear reconstruction operator over two dimensional triangularized domains with equilateral triangles. We focus on the local definition of the operator.
The ideas behind this definition come from some basic properties of the Harmonic mean of three positive values. We prove some results regarding the approximation properties of the operator and we carry out
some numerical tests giving evidence of the avoidance of any Gibbs effects.
\end{abstract}

{\bf Key Words.} Harmonic mean, reconstruction operators, adaptation, singularities, approximation, Gibbs.

\vspace{10pt} {\bf AMS(MOS) subject classifications.} 41A05, 41A10, 65D17.

\section{Introduction}\label{sec1}

The arithmetic and the harmonic mean of positive numbers are present in many scientific applications ranging from statistics to numerical analysis. The harmonic mean has the property of penalizing large
values, giving rise, because of this reason, to several interesting applications. Moreover, when the arguments do not differ much from each other, both means remain close, which is another crucial property in
applications.

In our field of research both the arithmetic mean and the harmonic mean have been used successfully in several occasions for different applications. See for instance \cite{Susa,Mar} for an example in numerical conservation laws, \cite{ADLT,ADLT02,AL04,Trillo_thesis} for applications regarding signal processing and signal compression, \cite{ALRT,Den1} for their use in image denoising and compression, and \cite{ADT,ACRT,KD} for the case of generation of curves and subdivision.

In \cite{OT3} a nonlinear reconstruction operator called PPH (Piecewise Polynomial Harmonic) was extended to nonuniform grids by using a specific weighted harmonic mean instead of the standard harmonic mean. In this paper our aim is to give rise to non separable reconstructions in two dimensions based on similar ideas as the ones used to build the original PPH reconstruction \cite{ADLT} by using two main basic properties of the harmonic mean of two positive values, as it has been mentioned above.
More specifically speaking, we need to dispose of an appropriate mean in two dimensions which satisfies these required basic properties as the harmonic mean does. In fact, the straightforward choice of using the harmonic mean of three positive values as a candidate works as it will be shown through this paper.

Nonlinear means appear as good candidates to define adapted reconstruction methods which mi\-nimize the undesirable effects provoked by the presence of a discontinuity in the data. In fact, we will show in the numerical experiments section one particular new reconstruction method for two dimensional functions which seems to avoid the Gibbs effects according to the numerical examples. This re\-construction extends somehow the PPH reconstruction defined in \cite{ADLT,OT3}, which was proven theoretically to avoid the mentioned Gibbs effects.

The paper is organized as follows: In Section \ref{sec2} we work with the arithmetic and harmonic means of three positive numbers, proving two essential results about these means which will allow us to define
 adapted reconstruction operators in the numerical experiments section.  In Section \ref{sec3} we explicitly define a new reconstruction in $2D$ over triangular meshes adapted to discontinuities.  Finally, in Section \ref{sec4} we give some perspectives and conclusions.

\section{Harmonic mean of three positive values} \label{sec2}
In this section we present the properties about the harmonic mean of three positive values that are relevant for the rest of the paper.
\begin{definition}\label{M3}
Given $a_1>0,$ $a_2>0,$ $a_3>0$  three positive real numbers, their harmonic mean is defined by
\begin{equation*} \label{eq:HW3}
	H_3(a_1,a_2,a_3)= \dfrac{3a_1 a_2 a_3}{a_2 a_3 + a_1 a_3 + a_1 a_2}.
\end{equation*}
\end{definition}

\begin{lemma} \label{lemaAcotacionMinimo3}
If $a_1 > 0,$ $a_2 > 0,$ $a_3 >0,$ the harmonic mean is bounded as follows
	\begin{equation} \label{eq:boundedVnu3}
		H_3(a_1,a_2,a_3) < 3 \min\left\{ a_1,a_2,a_3\right\}.
	\end{equation}
\end{lemma}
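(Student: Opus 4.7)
The plan is to reduce the claim to a trivial positivity statement after using symmetry to fix which of the three arguments is the minimum. The harmonic mean $H_3$ is symmetric in its arguments, so without loss of generality I would assume $a_1 = \min\{a_1, a_2, a_3\}$, and then it suffices to prove
\[
H_3(a_1,a_2,a_3) < 3\, a_1.
\]

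Plugging in the defining formula from Definition \ref{M3}, this inequality becomes
\[
\frac{3 a_1 a_2 a_3}{a_2 a_3 + a_1 a_3 + a_1 a_2} < 3 a_1.
\]
Since $a_1 > 0$ and all denominators are strictly positive, I can cancel $3 a_1$ from both sides and clear the denominator, reducing the claim to
\[
a_2 a_3 < a_2 a_3 + a_1 a_3 + a_1 a_2,
\]
which is equivalent to $a_1(a_2 + a_3) > 0$. This is immediate from the hypothesis $a_1, a_2, a_3 > 0$, so strict inequality holds and the lemma follows.

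I do not foresee any real obstacle here. The only mild subtlety is that the bound is stated in the unsymmetric form with $\min\{a_1,a_2,a_3\}$ on the right, so one must invoke the symmetry of $H_3$ (or equivalently argue case by case on which $a_i$ is minimal) before performing the cancellation; once that reduction is made, the proof is a one-line positivity argument. It is also worth observing that the factor $3$ in the bound is sharp in the limiting sense (as $a_2, a_3 \to \infty$ with $a_1$ fixed, the ratio $H_3/a_1$ tends to $3$), which confirms that the strict inequality cannot be improved to a non-strict bound with a smaller constant.
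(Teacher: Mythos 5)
Your proof is correct and follows essentially the same route as the paper's: both reduce the bound to the observation that the denominator $a_2a_3+a_1a_3+a_1a_2$ strictly exceeds the single product paired with each $a_i$. In fact your write-up is slightly more careful than the paper's one-line proof, which only displays the non-strict inequality $H_3 \le 3a_i$ even though the lemma claims strict inequality; your cancellation argument correctly yields the strict version.
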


\begin{proof}

\begin{equation*}
	H_{3}(a_1,a_2,a_3)= \dfrac{3a_{1}a_{2}a_{3}}{a_2 a_3 + a_1 a_3 + a_1 a_2} \leq 3 a_{i}, \quad i=1,2,3.
\end{equation*}
\end{proof}

\begin{lemma}\label{lemaOrdenMedias3}
Let $a>0$ a fixed positive real number, and let $a_1\geq a,$ $a_2\geq a,$ $a_3\geq a.$ If  $|a_1-a_2|= O(h),$ $|a_1-a_3|= O(h),$  then the harmonic mean is also
close to the arithmetic mean $M_3(a_1,a_2,a_3)=\dfrac{a_1+a_2+a_3}{3},$
	\begin{eqnarray}\label{eq:distMVnu3} \notag
		|M_3(a_1, a_2,a_3)-H_3(a_1,a_2,a_3)|&=& \dfrac{(a_1-a_2)^2 a_3+(a_1-a_3)^2 a_2+(a_2-a_3)^2 a_1 }{3(a_2 a_3 + a_1 a_3 + a_1 a_2)}\\
&=& O(h^2).
	\end{eqnarray}
\end{lemma}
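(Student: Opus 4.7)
The plan is to split the proof into two independent parts. The first equality in \eqref{eq:distMVnu3} is an algebraic identity for three positive numbers with no smallness assumption whatsoever; the second ($O(h^2)$ bound) is where the hypotheses $a_i\ge a>0$ and the closeness of the $a_i$ are used.

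For the algebraic identity I would put $M_3-H_3$ over the common denominator $3(a_1a_2+a_1a_3+a_2a_3)$. The resulting numerator is
\begin{equation*}
(a_1+a_2+a_3)(a_1a_2+a_1a_3+a_2a_3)-9\,a_1a_2a_3.
\end{equation*}
Expanding the product yields the six mixed terms $a_i^2 a_j$ (with $i\ne j$) plus $3a_1a_2a_3$, so after the subtraction one is left with $a_1^2a_2+a_1^2a_3+a_1a_2^2+a_2^2a_3+a_1a_3^2+a_2a_3^2-6a_1a_2a_3$. On the other side, expanding each $(a_i-a_j)^2 a_k$ gives exactly the same six quadratic terms, and the three doubled cross-terms $-2a_1a_2a_3$ sum to $-6a_1a_2a_3$. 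Matching the expansions establishes the identity.

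For the $O(h^2)$ conclusion I would use the lower bound $a_i\ge a$ to get
\begin{equation*}
3(a_1a_2+a_1a_3+a_2a_3)\ge 9a^2>0,
\end{equation*}
so the denominator is bounded away from zero. The hypothesis gives $(a_1-a_2)^2=O(h^2)$ and $(a_1-a_3)^2=O(h^2)$, while the triangle inequality $|a_2-a_3|\le|a_2-a_1|+|a_1-a_3|=O(h)$ yields $(a_2-a_3)^2=O(h^2)$ as well. Since each $a_i$ is bounded (by $a_1$ plus an $O(h)$ term, hence uniformly bounded in the intended regime where $a_1$ is a bounded datum), each of the three numerator summands is $O(h^2)$, and dividing by the denominator bound finishes the estimate.

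The main obstacle is the bookkeeping of the algebraic identity; once that is set up, the rest is a one-line consequence of the lower bound on the $a_i$ and the triangle inequality. No deeper ingredient is needed, and Lemma~\ref{lemaAcotacionMinimo3} is not required here.
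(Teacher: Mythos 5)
Your proposal is correct and follows essentially the same route as the paper's proof: rewrite $M_3-H_3$ over the common denominator to obtain the stated identity, then combine the triangle inequality $|a_2-a_3|\le|a_2-a_1|+|a_1-a_3|=O(h)$ with the lower bound on the denominator coming from $a_i\ge a>0$. You simply spell out the expansion of the numerator and the denominator bound more explicitly than the paper does.
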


\begin{proof}
\begin{eqnarray*}
	|M_3(a_1,a_2,a_3)-H_{3}(a_1,a_2,a_3)|&=& |\dfrac{a_1+a_2+a_3}{3} - \dfrac{3a_{1}a_{2}a_{3}}{a_2 a_3 + a_1 a_3 + a_1 a_2}|\\
&=& |\dfrac{(a_1-a_2)^2 a_3+(a_1-a_3)^2 a_2+(a_2-a_3)^2 a_1 }{3(a_2 a_3 + a_1 a_3 + a_1 a_2)}|.
\end{eqnarray*}
Since $|a_1-a_2|=O(h),$ $|a_1-a_3|=O(h),$ and $|a_2-a_3|\leq |a_2-a_1|+|a_1-a_3|=O(h)+O(h)=O(h),$ we directly get the result.
\end{proof}

\section{A non separable PPH type local reconstruction operator over equilateral triangles} \label{sec3}
In this section our purpose is to define a nonlinear reconstruction operator adapted to jump discontinuities. In what follows, we are going to present a new nonlinear adapted recons\-truction
method for approximating two variable functions using the point values of the function over triangular meshes.
We are going to focuss on the local definition of the reconstruction operator for a given triangle of the mesh.
Let us consider $S \subseteq \mathbb{R}^2$ the equilateral triangle with sides of length $2h,$ with $h>0$ any positive real number, defined by the vertices $A (-h,\frac{\sqrt{3}}{2}h),$
$C (0,-\frac{\sqrt{3}}{2}h),$ $E (h,\frac{\sqrt{3}}{2}h),$ as shown in Figure \ref{fig:trian1S}. Let us also consider that the triangle is divided into $4$ new smaller triangles: $S_A$ of vertices $ABF,$
$S_C$ of vertices $CDB,$ $S_E$ of vertices $EFD,$ and $S_{R}$ of vertices $BDF$, just by considering the mid points of each side of the original triangle, see also Figure \ref{fig:tria3}. We are going to describe how to build a nonlinear reconstruction inside
the triangle $S_R$ of an underlying function $f(x,y),$ from which we know its point values at the six mentioned points $A,B,C,D,E,F.$ This nonlinear reconstruction will attain third order of approximation in case the underlying function $f(x,y)$ is of class $C^3,$ and will be adapted to the presence of jump discontinuities that affect only one of the three values $A,$ $C,$ or $E,$ see also Figure \ref{fig:trian2}.\\
\begin{figure}
\begin{center}
  \includegraphics[width=6.5cm]{./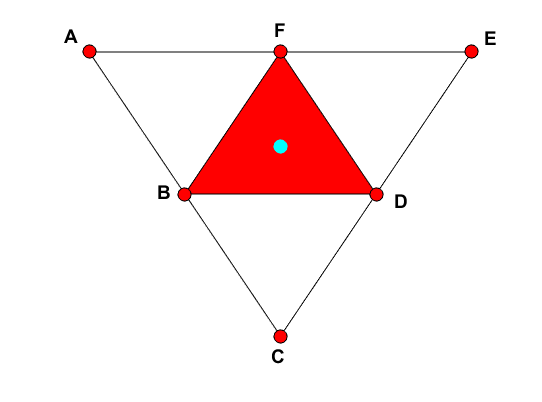}\\
\end{center}
\caption{Disposition of the considered domain to build the reconstruction inside the red triangle $S_R$ with vertices $BDF,$ using the point values of an underlying function $f(x,y)$ at the six points
$A,B,C,D,E,F.$}\label{fig:trian1S}
\end{figure}
Firstly, we are going to define the associated linear reconstruction, that it is going to be nothing more than the second degree interpolating polynomial that goes through the six given initial points.
Let us write the polynomial around the barycenter of the triangle $G (0,\frac{\sqrt{3}}{6}h)$ in the form
\begin{eqnarray} \label{pollin}
p(x,y)&=& a_{00}+a_{10}x+a_{01}(y-\frac{\sqrt{3}}{6}h)+a_{20}x^2+a_{11}x(y-\frac{\sqrt{3}}{6}h)+a_{02}(y-\frac{\sqrt{3}}{6}h)^2.
\end{eqnarray}
Imposing the interpolation conditions $p(Q_i)=f(Q_i),$ for $Q_i \in \{A,B,C,D,E,F\},$ we get a linear system of equations, which has unique solution given by
\begin{eqnarray} \label{coeflin}
a_{00}&=& \frac{4}{9}(f_B+f_D+f_F)-\frac{1}{9}(f_A+f_C+f_E), \\ \notag
a_{10}&=& \frac{-f_A-4f_B+4f_D+f_E}{6h},\\ \notag
a_{01}&=& \frac{\sqrt{3}}{18h}(f_A-4f_B-2f_C-4f_D+f_E+8f_F),\\ \notag
a_{20}&=& \frac{f_A-2f_F+f_E}{2h^2},\\ \notag
a_{11}&=& -\frac{\sqrt{3}}{3h^2}(f_A-2f_B+2f_D-f_E),\\ \notag
a_{02}&=& \frac{f_A-4f_B+4f_C-4f_D+f_E+2f_F}{6h^2}, \notag
\end{eqnarray}
where $f_{Q_i}$ denotes $f(Q_i).$
It is easy to prove, by using Taylor expansions, the following theorem that ensures third order of approximation of the proposed linear reconstruction.
\begin{theorem} \label{teo:recolin}
Let $f:\Omega \Rightarrow \mathbb{R}$ be a function of class $C^3(\Omega),$ with $S \subseteq \Omega.$ And let $p(x,y)$ denote the interpolating polynomial defined by (\ref{pollin}) with the coefficients
 given by (\ref{coeflin}). Then, we have
\begin{equation*}
|f(x,y) - p(x,y)|=O(h^3), \ \forall \ (x,y) \in S_R.
\end{equation*}
\end{theorem}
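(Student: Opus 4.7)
The plan is to reduce the error bound to a comparison between the interpolating polynomial $p(x,y)$ and the degree-two Taylor polynomial of $f$ expanded about the barycenter $G=(0,\tfrac{\sqrt{3}}{6}h)$. Since every vertex $Q_i\in\{A,B,C,D,E,F\}$ lies within distance $O(h)$ of $G$, Taylor's theorem with remainder (using $f\in C^3$ on the compact set $S\subseteq\Omega$) gives
\begin{equation*}
f(Q_i)=T_2(Q_i)+R_i,\qquad |R_i|=O(h^3),
\end{equation*}
where $T_2$ is the Taylor polynomial of degree $2$ about $G$, written in the same form as (\ref{pollin}) but with coefficients $\tilde a_{kl}=\tfrac{1}{k!l!}\partial_x^{k}\partial_y^{l}f(G)$. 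Also, for any $(x,y)\in S_R$ one has $|x|=O(h)$ and $|y-\tfrac{\sqrt{3}}{6}h|=O(h)$, so the standard Taylor remainder estimate yields $|f(x,y)-T_2(x,y)|=O(h^3)$ directly.

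The next step is to estimate $|p(x,y)-T_2(x,y)|$. Because the interpolation conditions are linear in the data and $T_2$ itself satisfies $T_2(Q_i)=f(Q_i)-R_i$, plugging $T_2(Q_i)+R_i$ in place of $f(Q_i)$ in the explicit formulas (\ref{coeflin}) gives $a_{kl}=\tilde a_{kl}+\delta_{kl}$, where $\delta_{kl}$ is the same fixed linear combination of the residuals $R_i$, divided by the appropriate power of $h$ appearing in (\ref{coeflin}). Reading off the powers of $h$ in the denominators, one obtains $\delta_{00}=O(h^3)$, $\delta_{10}=\delta_{01}=O(h^2)$ and $\delta_{20}=\delta_{11}=\delta_{02}=O(h)$. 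Since the corresponding monomial $x^{k}(y-\tfrac{\sqrt 3}{6}h)^{l}$ has size $O(h^{k+l})$ on $S_R$, each term contributes $O(h^3)$.

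Combining the two estimates by the triangle inequality,
\begin{equation*}
|f(x,y)-p(x,y)|\;\le\;|f(x,y)-T_2(x,y)|+|T_2(x,y)-p(x,y)|=O(h^3),
\end{equation*}
uniformly for $(x,y)\in S_R$, which is the claim.

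The main obstacle is essentially bookkeeping: one must check that every denominator $h^{k+l}$ in the formulas (\ref{coeflin}) is exactly compensated by the size $O(h^{k+l})$ of the corresponding monomial evaluated on $S_R$, so that the $O(h^3)$ residuals $R_i$ propagate cleanly to an $O(h^3)$ global bound. No cancellation between distinct residuals is needed — the crude triangle-inequality estimate suffices — which is why the argument is straightforward even though the coefficients in (\ref{coeflin}) look involved.
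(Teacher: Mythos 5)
Your proof is correct and follows the same route the paper indicates (the paper omits the details, stating only that the result follows ``by using Taylor expansions''): you compare $p$ with the second-order Taylor polynomial at the barycenter, use the uniqueness/linearity of the interpolation formulas to propagate the $O(h^3)$ residuals into the coefficients, and balance each $h^{-(k+l)}$ denominator against the $O(h^{k+l})$ size of the corresponding monomial on $S_R$. Nothing further is needed.
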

We have then accomplished the first step in the definition of the nonlinear method, that is, we have a ready to modify linear method.
Secondly, we want to rewrite the coefficients of the linear reconstruction by making appear arithmetic means. Let us define $\Delta_A,$ $\Delta_C,$ and $\Delta_E$ as follows
\begin{eqnarray*} \label{def:difdiv2d}
\Delta_{A}&:=&\frac{f_A-(f_B+f_F)+f_D}{h},\\ \notag
\Delta_{C}&:=&\frac{f_C-(f_B+f_D)+f_F}{h},\\ \notag
\Delta_{E}&:=&\frac{f_E-(f_D+f_F)+f_B}{h}.\notag
\end{eqnarray*}
It is immediate to prove, by using Taylor expansions, that in smooth areas of the function
\begin{eqnarray*} \label{prop:difdiv2d}
&& \Delta_A=O(h), \ \Delta_C=O(h), \ \Delta_E=O(h),\\
&& \Delta_{A}-\Delta_{C}=O(h), \ \Delta_{A}-\Delta_{E}=O(h), \ \Delta_{C}-\Delta_{E}=O(h).
\end{eqnarray*}
Moreover, these values $\Delta_A,$ $\Delta_C,$ $\Delta_E$ act as smoothness indicators, a kind of divided differences, in the sense that if a jump discontinuity lies affecting one of the values $A,$ $C,$ or $E,$
then the corresponding divided difference will be $O(\frac{1}{h}),$ while the others will remain $O(h).$
In Figure \ref{fig:trian2}, we see the case of having the vertex $E$ affected by a jump discontinuity, which takes place along a curve plotted in blue. The idea behind the method that we are going to explain is
to substitute $f_E$ for a more suitable value $\widetilde{f}_E,$ that both maintains the approximation accuracy in case of dealing with a smooth function and allows for adaptation in case of discontinuity.\\
\noindent The coefficients in (\ref{coeflin}) can be rewritten as follows
\begin{eqnarray}\label{coeflinMean}
a_{00}&=& \frac{1}{3}(f_B+f_D+f_F)-\frac{h}{3}\frac{\Delta_{A}+\Delta_{C}+\Delta_{E}}{3}, \\ \notag
a_{10}&=& \frac{f_D-f_B}{h}-\frac{1}{6}(2\Delta_{A}+\Delta_{C})+\frac{1}{2}\frac{\Delta_{A}+\Delta_{C}+\Delta_{E}}{3},\\  \notag
a_{01}&=& \frac{\sqrt{3}}{6}(\Delta_{C}+2\frac{f_F-f_C}{h})+\frac{\sqrt{3}}{6}\frac{\Delta_{A}+\Delta_{C}+\Delta_{E}}{3},\\ \notag
a_{20}&=& -\frac{3}{2h}\Delta_{C}+\frac{3}{2h}\frac{\Delta_{A}+\Delta_{C}+\Delta_{E}}{3},\\ \notag
a_{11}&=& -\frac{2\sqrt{3}}{h}\Delta_{A}+\frac{\sqrt{3}}{3h^2}(f_D-2f_F+f_B)+ \frac{\sqrt{3}}{h}\frac{\Delta_{A}+\Delta_{C}+\Delta_{E}}{3},\\ \notag
a_{02}&=& -\frac{1}{2h}\Delta_{C}+\frac{1}{2h}\frac{\Delta_{A}+\Delta_{C}+\Delta_{E}}{3}.\\ \notag
\end{eqnarray}
\begin{figure}
\begin{center}
  \includegraphics[width=6.5cm]{./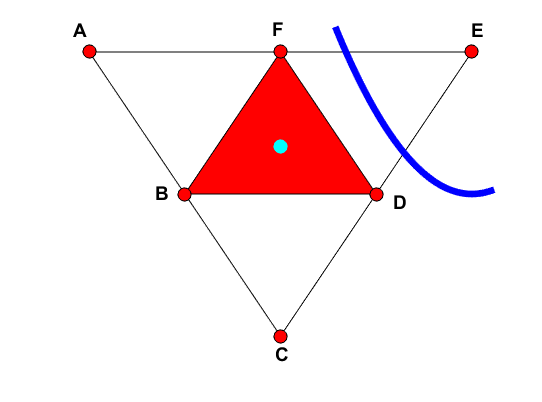}\\
\end{center}
\caption{Disposition of the considered domain affected by a jump discontinuity along the blue curve.} \label{fig:trian2}
\end{figure}
It is important that the potentially affected value by a possible discontinuity $f_E,$ the one that makes $\Delta_E$ be the largest in absolute value, appears only inside the term $\Delta_E$ and
in turn $\Delta_E$ appears only in the arithmetic mean.\\

Thirdly, we are going to modify the expressions of the coefficients in (\ref{coeflinMean}) implementing the substitution of the arithmetic means by adequate harmonic means.
Since the values of the divided differences could be positive, negative, or zero, and we are also going to need that these quantities satisfy the hypothesis of Lemmas \ref{lemaAcotacionMinimo3} and \ref{lemaOrdenMedias3},
we require the redefinition of the harmonic mean by using a translation strategy. In order to do so, we introduce
the concept of translation operator, which will allow us to extend the definition of the harmonic mean.
\begin{definition} \label{defTras}
Given $h>0,$ a translation operator $T$ is any function $T:\mathbb{R}^3 \rightarrow \mathbb{R}$ satisfying
\begin{enumerate}
\item $T(0,0,0)=0,$
\item $T(x,y,z)=T(\sigma(x),\sigma(y),\sigma(z)),$ where $\sigma$ is any permutation of three elements,
\item $T(-x,-y,-z)=-T(x,y,z),$
\item $sign(x+T(x,y,z))=sign(y+T(x,y,z))=sign(z+T(x,y,z)),$ $\forall \ (x,y,z)\neq (0,0,0),$
\item if $(x,y,z)\neq (0,0,0),$ with $|s|=\max\{|x|,|y|,|z|\},$
\begin{itemize}
\item[a)] if $\exists \ s_1: |s_1|=|s|, sign(s_1)\neq sign(s),$ then $sign(x+T(x,y,z))>0,$ \\ $sign(y+T(x,y,z))>0,$ $sign(z+T(x,y,z))>0,$
\item[b)] if $\nexists \ s_1: |s_1|=|s|, sign(s_1)\neq sign(s),$ then $sign(x+T(x,y,z))sign(s)>0,$ \\ $sign(y+T(x,y,z))sign(s)>0,$ $sign(z+T(x,y,z))sign(s)>0,$
\end{itemize}
\item $\min\{|x+T(x,y,z)|,|y+T(x,y,z),|z+T(x,y,z)|\}=O(1),$  $\forall \ (x,y,z)\neq (0,0,0),$ with $|x|=O(h^{\alpha}),$ $|y|=O(h^{\alpha}),$ $|z|=O(h^{\alpha}),$ for some $\alpha \geq 0.$
\end{enumerate}
\end{definition}
Properties $1$ to $4$ are meant to apply the harmonic mean in mind by using basically the expression given for positive numbers. While the property $5$ will play an important role to guarantee the
adaptation of the method in case one of the arguments is very large due to the presence of a discontinuity. In turn, property $6$ ensures that the new arguments that are going to be considered in the new definition
of the mean will satisfy the hypothesis of Lemma \ref{lemaOrdenMedias3}.

We are now ready to redefine the weighted harmonic mean
\begin{equation} \label{meanJ}
J_3(a_1,a_2,a_3)=\left\{ \begin{array}{ll}
H_3(a_1+T,a_2+T,a_3+T)-T, & (a_1,a_2,a_3)\neq (0,0,0),\\
0, & (a_1,a_2,a_3)= (0,0,0),\\
\end{array} \right.
\end{equation}
where $T$ is any translation operator satisfying Definition \ref{defTras}. It it important to notice that the new mean also satisfy similar lemmas, Lemma \ref{lemaAcotacionMinimo3} and Lemma \ref{lemaOrdenMedias3},
as the harmonic mean. In fact, we can prove the following two lemmas.
\begin{lemma} \label{minimo3T}
Let $a_i>0, \ i=1,2,3$ be be real numbers. Then, the translated harmonic mean $J_3$ is bounded as follows
\begin{equation*}
	|J_{3}(a_1,a_2,a_3)| \leq \max\{ 3 |a_1+T|, |T|\}.
\end{equation*}
\end{lemma}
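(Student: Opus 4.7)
The plan is to unpack the definition (\ref{meanJ}) of $J_3$ and reduce the estimate to Lemma \ref{lemaAcotacionMinimo3}. First I would dispose of the branch $(a_1,a_2,a_3)=(0,0,0)$ (excluded here by the hypothesis $a_i>0$, but worth mentioning for completeness of the piecewise definition): there $J_3=0$ and the inequality is trivial since the right-hand side is nonnegative.

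For the non-degenerate branch I would start from
$$J_3(a_1,a_2,a_3) = H_3(a_1+T,\,a_2+T,\,a_3+T) - T,$$
and apply the triangle inequality to obtain $|J_3|\le |H_3(a_1+T,a_2+T,a_3+T)|+|T|$. The key step is then to bound the translated harmonic mean. Property 4 of Definition \ref{defTras} forces $a_1+T$, $a_2+T$ and $a_3+T$ to share a common sign; since the closed-form expression in Definition \ref{M3} is odd under a global sign flip of its arguments (numerator cubic, denominator quadratic in the inputs), I can factor that common sign out of $H_3$ and apply Lemma \ref{lemaAcotacionMinimo3} to the absolute values of the shifted arguments, yielding
$$|H_3(a_1+T,a_2+T,a_3+T)| \;\le\; 3\min\{|a_1+T|,|a_2+T|,|a_3+T|\} \;\le\; 3|a_1+T|.$$

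Combining both ingredients gives the bound $|J_3|\le 3|a_1+T|+|T|$, which already matches the stated estimate up to constants. To sharpen the sum into the declared $\max$ form I would split by the sign relationship between $H_3$ and $T$: when they share a sign, cancellation inside $H_3-T$ yields $|J_3|\le \max\{|H_3|,|T|\}\le \max\{3|a_1+T|,|T|\}$ directly; when they have opposite signs, the positivity hypothesis $a_i>0$ forces $T<0$ together with $a_i+T>0$ (so that $|T|<a_i$), and one has to squeeze the explicit formula for $H_3$ further to compensate. The main obstacle I anticipate is precisely this opposite-sign case: a naive triangle inequality is not tight enough on its own, and one must exploit the detailed structural constraints baked into properties 4--6 of Definition \ref{defTras} to reabsorb the extra $|T|$ contribution into the maximum on the right-hand side.
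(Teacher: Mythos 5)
Your handling of the harmonic-mean factor is fine and matches the paper: properties 4--5 of Definition \ref{defTras} give the shifted arguments a common sign, the sign factors out of $H_3$, and Lemma \ref{lemaAcotacionMinimo3} yields $|H_3(a_1+T,a_2+T,a_3+T)|\le 3\min\{|a_1+T|,|a_2+T|,|a_3+T|\}\le 3|a_1+T|$; that is exactly the second inequality in the paper's one-line chain. The problem is the step you yourself flag as ``the main obstacle'': it is not a technicality to be squeezed out later, it is the entire content of the lemma. The paper never passes through the additive triangle-inequality bound $|J_3|\le|H_3|+|T|$ at all; it asserts outright that $H_3(a_1+T,a_2+T,a_3+T)$ and $T$ have the same sign, so that the elementary fact $|u-v|\le\max\{|u|,|v|\}$ for same-signed $u,v$ gives $|J_3(a_1,a_2,a_3)|=|H_3(a_1+T,a_2+T,a_3+T)-T|\le\max\{|H_3(a_1+T,a_2+T,a_3+T)|,|T|\}$ in one stroke, and the opposite-sign case you worry about simply never arises. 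Your proposal neither proves that same-sign claim nor carries out the promised ``squeezing of the explicit formula'' in the opposite-sign case, so what you actually establish is only the strictly weaker bound $|J_3|\le 3|a_1+T|+|T|$, which cannot be converted into the stated $\max$ without further input.

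To close the gap you would have to supply the paper's missing link: show that for the translation operators under consideration, $T$ carries the same sign as the (common) sign of the shifted arguments, hence the same sign as $H_3(a_1+T,a_2+T,a_3+T)$. Be aware that this does not follow formally from Definition \ref{defTras} as written --- properties 4 and 5 constrain the signs of $a_i+T$, not of $T$ itself --- so your instinct that the opposite-sign case is where the difficulty lives is sound; but an honest proof must either derive that sign property for the concrete $T$ being used or add it as a hypothesis, rather than leave the case open as your proposal does.
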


\begin{proof}
Since $J_3(a_1 + T, a_2+T,a_3+T)$ and $T$ have the same sign, then applying Lemma \ref{lemaAcotacionMinimo3} we get
\begin{eqnarray*}
		|J_3(a_1,a_2,a_3)| &\leq& \max\left\{|H_3(a_1 + T, a_2+T,a_3+T)|,|T|\right\}\leq \max\left\{3|a_1+T|,|T|\right\}.\\
	\end{eqnarray*}
\end{proof}

\begin{lemma} \label{orden3T}
Let $a_i>0, \ i=1,2,3$ be real numbers.
If $\left|a_{1}-a_{i}\right| = O(h), \ i=2,3,$ then, the translated weighted harmonic mean $J_3$ and the arithmetic
mean $M_3:=\dfrac{a_1+a_2+a_3}{3}$ satisfy
\begin{equation*}
\left|M_{3} - J_{3} \right|=O(h^{2}).
\end{equation*}
\end{lemma}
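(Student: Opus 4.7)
The plan is to reduce the claim to Lemma \ref{lemaOrdenMedias3} applied to the triple of translated arguments. Setting $T := T(a_1,a_2,a_3)$, additivity of the arithmetic mean under translation together with the definition (\ref{meanJ}) of $J_3$ yields the identity
\[
M_3(a_1,a_2,a_3) - J_3(a_1,a_2,a_3) = M_3(a_1+T, a_2+T, a_3+T) - H_3(a_1+T, a_2+T, a_3+T),
\]
so it suffices to control the right-hand side by $O(h^2)$.

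Next I would verify that the translated triple $(a_1+T, a_2+T, a_3+T)$ fulfills the hypotheses of Lemma \ref{lemaOrdenMedias3}. Since each $a_i > 0$, the triple is nonzero and the element $s$ of maximum absolute value satisfies $s > 0$, with no entry of opposite sign; case (b) of property 5 in Definition \ref{defTras} then forces $\mathrm{sign}(a_i + T) > 0$ for every $i$, so each $a_i + T$ is strictly positive. Property 6 of the same definition provides a uniform lower bound $a_i + T \geq a$ with $a > 0$ independent of $h$, which is the fixed positive constant required by Lemma \ref{lemaOrdenMedias3}. The proximity hypothesis is inherited for free, since translation cancels in differences: $|(a_1+T)-(a_i+T)| = |a_1-a_i| = O(h)$ for $i=2,3$, and hence $|(a_2+T)-(a_3+T)| = O(h)$ by the triangle inequality.

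Finally, Lemma \ref{lemaOrdenMedias3} applied to the translated triple yields
\[
|M_3(a_1+T,a_2+T,a_3+T) - H_3(a_1+T,a_2+T,a_3+T)| = O(h^2),
\]
which by the reduction in the first paragraph is precisely $|M_3 - J_3| = O(h^2)$. The only delicate step, and hence the main obstacle, is the verification of strict positivity and a uniform positive lower bound for the translated arguments; this is not a computation but a careful bookkeeping exercise with the axioms 4--6 imposed on the translation operator in Definition \ref{defTras}. Once those axioms are correctly invoked, the conclusion follows immediately from Lemma \ref{lemaOrdenMedias3}.
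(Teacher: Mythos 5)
Your proof follows essentially the same route as the paper's: both reduce the claim via the translation identity $M_3(a_1,a_2,a_3)-J_3(a_1,a_2,a_3)=M_3(a_1+T,a_2+T,a_3+T)-H_3(a_1+T,a_2+T,a_3+T)$ and then invoke Lemma \ref{lemaOrdenMedias3} on the translated triple. Your explicit verification that the translated arguments are positive and uniformly bounded below (via properties 5 and 6 of Definition \ref{defTras}) is a step the paper leaves implicit, so your write-up is, if anything, more complete.
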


\begin{proof}
The case of $(a_1,a_2,a_3)=(0,0,0)$ is trivial. If $(a_1,a_2,a_3)\neq (0,0,0),$
using the definition of $J_3$ we get
\begin{eqnarray*}
		|M_w(a_1,a_2,a_3)-J_3(a_1,a_2,a_3)|&=& |M_3(a_1,a_2,a_3)-H_3(a_1+T,a_2+T,a_3+T)-T|\\
&=&|M_3(a_1+T,a_2+T,a_3+T)-H_3(a_1+T,a_2+T,a_3+T)|,
	\end{eqnarray*}
and applying Lemma \ref{lemaOrdenMedias3} we have that
\begin{eqnarray*}
|M_3(a_1+T,a_2+T,a_3+T)-H_3(a_1+T,a_2+T,a_3+T)|&=&O(h^2).
\end{eqnarray*}
\end{proof}

Thanks to the new translated version of the weighted harmonic mean in (\ref{meanJ}) we can finally define the modified coefficients
\begin{eqnarray}\label{coefPPHtria}
\widetilde{a}_{00}&=& \frac{1}{3}(f_B+f_D+f_F)-\frac{h}{3}J_{3}(\Delta_{A},\Delta_{C},\Delta_{E}), \\ \notag
\widetilde{a}_{10}&=& \frac{f_D-f_B}{h}-\frac{1}{6}(2\Delta_{A}+\Delta_{C})+\frac{1}{2}J_{3}(\Delta_{A},\Delta_{C},\Delta_{E}),\\  \notag
\widetilde{a}_{01}&=& \frac{\sqrt{3}}{6}(\Delta_{C}+2\frac{f_F-f_C}{h})+\frac{\sqrt{3}}{6}J_{3}(\Delta_{A},\Delta_{C},\Delta_{E}),\\ \notag
\widetilde{a}_{20}&=& -\frac{3}{2h}\Delta_{C}+\frac{3}{2h}J_{3}(\Delta_{A},\Delta_{C},\Delta_{E}),\\ \notag
\widetilde{a}_{11}&=& -\frac{2\sqrt{3}}{h}\Delta_{A}+\frac{\sqrt{3}}{3h^2}(f_D-2f_F+f_B)+ \frac{\sqrt{3}}{h}J_{3}(\Delta_{A},\Delta_{C},\Delta_{E}),\\ \notag
\widetilde{a}_{02}&=& -\frac{1}{2h}\Delta_{C}+\frac{1}{2h}J_{3}(\Delta_{A},\Delta_{C},\Delta_{E}).\\ \notag
\end{eqnarray}
The new nonlinear local reconstruction method writes then
\begin{eqnarray} \label{polPPH}
\widetilde{p}(x,y)&=& \widetilde{a}_{00}+\widetilde{a}_{10}x+\widetilde{a}_{01}(y-\frac{\sqrt{3}}{6}h)+\widetilde{a}_{20}x^2+\widetilde{a}_{11}x(y-\frac{\sqrt{3}}{6}h)+\widetilde{a}_{02}(y-\frac{\sqrt{3}}{6}h)^2,
\end{eqnarray}
where the coefficients $\widetilde{a}_{00},$ $\widetilde{a}_{10},$ $\widetilde{a}_{01},$ $\widetilde{a}_{20},$ $\widetilde{a}_{11},$ $\widetilde{a}_{02}$ are given in (\ref{coefPPHtria}).
It is also interesting to notice that this reconstruction amounts to modifying the value $f_E$
\begin{eqnarray*}
f_E=f_B+f_D+f_F-(f_A+f_C)+3h M_{3}(\Delta_{A},\Delta_{C},\Delta_{E}),\\
\end{eqnarray*}
in order to get
\begin{eqnarray*}
\widetilde{f}_E=f_B+f_D+f_F-(f_A+f_C)+3h J_{3}(\Delta_{A},\Delta_{C},\Delta_{E}),\\
\end{eqnarray*}
and then considering the original interpolation problem with modified function values $\{f_A,f_B,f_C,f_D,f_F,\widetilde{f}_E\}.$
By definition, it is not difficult to prove a theorem about the adaptation of the proposed method and the third order accuracy in smooth areas.
\begin{theorem} \label{teo:recoPPH}
Let $f:\Omega \Rightarrow \mathbb{R}$ be a function of class $C^3(\Omega),$ with $S \subseteq \Omega.$ And let $\widetilde{p}(x,y)$ denote the interpolating polynomial defined by (\ref{polPPH}) with the coefficients
 given by (\ref{coefPPHtria}). Then, we have
\begin{equation} \label{ordenptildeSmooth}
|f(x,y) - \widetilde{p}(x,y)|=O(h^3), \ \forall \ (x,y) \in S_R.
\end{equation}
Moreover, if $f$ has a jump discontinuity along a curve letting $S\setminus S_E$ to one side and the vertex $E$ to the other side of the curve, then we have
\begin{equation} \label{ordenptildeDis}
|f(x,y) - \widetilde{p}(x,y)|=O(h), \ \forall \ (x,y) \in S_R.
\end{equation}
\end{theorem}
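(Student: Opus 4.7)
The plan is to split the error as $|f-\widetilde{p}|\leq |f-p| + |p-\widetilde{p}|$, use Theorem~\ref{teo:recolin} for the first summand, and analyze the second summand by tracking how the substitution of the arithmetic mean $M_3(\Delta_A,\Delta_C,\Delta_E)$ by $J_3(\Delta_A,\Delta_C,\Delta_E)$ propagates through the coefficients. Comparing (\ref{coeflinMean}) with (\ref{coefPPHtria}) one sees that $\widetilde{a}_{ij}-a_{ij}$ is, in each case, a fixed rational constant times $(J_3-M_3)$ divided by the appropriate power of $h$: namely $h/3$ for $\widetilde{a}_{00}$, a constant for $\widetilde{a}_{10}$ and $\widetilde{a}_{01}$, and $1/h$ for $\widetilde{a}_{20}$, $\widetilde{a}_{11}$, $\widetilde{a}_{02}$. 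Since on $S_R$ every monomial $x^i(y-\tfrac{\sqrt{3}}{6}h)^j$ with $i+j\leq 2$ is $O(h^{i+j})$, the contribution of each modified coefficient to $\widetilde{p}(x,y)-p(x,y)$ is $(J_3-M_3)\cdot O(h^{2-k})\cdot O(h^{2-j})$ where the factors balance; the upshot is that $|\widetilde{p}-p|\leq C\,h\cdot|J_3-M_3|$ on $S_R$, uniformly in $(x,y)$.

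For the smooth estimate (\ref{ordenptildeSmooth}), I would then apply Lemma~\ref{orden3T}: under the $C^3$ hypothesis Taylor expansions give $\Delta_A,\Delta_C,\Delta_E=O(h)$ with pairwise differences $O(h)$, so Lemma~\ref{orden3T} yields $|J_3-M_3|=O(h^2)$. Plugging this into the bound of the previous paragraph gives $|\widetilde{p}-p|=O(h^3)$ on $S_R$, and combining with Theorem~\ref{teo:recolin} closes this case.

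For the discontinuity estimate (\ref{ordenptildeDis}) I would work with the smooth extension $\tilde{f}$ of $f|_{S\setminus S_E}$ across the jump curve, so that $\tilde{f}=f$ on $S_R$ and $\tilde{f}\in C^3$ in a neighborhood of $S$. Let $\tilde{\Delta}_E$ be the indicator built from $\tilde{f}(E)$ in place of $f_E$; then $\tilde{\Delta}_E=O(h)$ while the genuine $\Delta_E$ is $O(1/h)$. Applying Theorem~\ref{teo:recolin} to $\tilde{f}$ gives a polynomial $\tilde{p}_{\mathrm{ext}}$ with $|\tilde{f}-\tilde{p}_{\mathrm{ext}}|=O(h^3)$ on $S_R$, and since $\widetilde{p}$ is obtained by interpolating the \emph{same five smooth values} $f_A,f_B,f_C,f_D,f_F$ together with $\widetilde{f}_E$ instead of $\tilde{f}(E)$, the difference $\widetilde{p}-\tilde{p}_{\mathrm{ext}}$ depends linearly on $\widetilde{f}_E-\tilde{f}(E)$ through the explicit coefficients of $f_E$ in (\ref{coeflin}); on $S_R$ this gives $|\widetilde{p}-\tilde{p}_{\mathrm{ext}}|\leq C\,|\widetilde{f}_E-\tilde{f}(E)|$. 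Using the identity $\tilde{f}(E)=f_B+f_D+f_F-f_A-f_C+3h\,M_3(\Delta_A,\Delta_C,\tilde{\Delta}_E)$ together with the definition of $\widetilde{f}_E$, the quantity $\widetilde{f}_E-\tilde{f}(E)$ reduces to $3h\bigl(J_3(\Delta_A,\Delta_C,\Delta_E)-M_3(\Delta_A,\Delta_C,\tilde{\Delta}_E)\bigr)$; the second term is $O(h)$, and the first is $O(1)$ by Lemma~\ref{minimo3T} combined with properties~$5$--$6$ of Definition~\ref{defTras} (the translation keeps the minimum translated argument of order one, so $H_3$ is $O(1)$ and $T$ too). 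Hence $|\widetilde{f}_E-\tilde{f}(E)|=O(h)$, which yields $|\widetilde{p}-\tilde{p}_{\mathrm{ext}}|=O(h)$ and, combined with the smooth estimate for $\tilde{p}_{\mathrm{ext}}$, gives (\ref{ordenptildeDis}).

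The routine part is the bookkeeping in the smooth case; the delicate step, and the one I expect to be the main obstacle, is verifying that $|J_3(\Delta_A,\Delta_C,\Delta_E)|=O(1)$ when $\Delta_E$ blows up to $O(1/h)$. This requires using Definition~\ref{defTras} rather carefully: property~$4$ forces all translated arguments to share one sign, property~$5$ picks the right sign according to which $\Delta$ dominates, and property~$6$ forces the minimum translated value to stay $O(1)$, at which point Lemma~\ref{lemaAcotacionMinimo3} (applied to the three positive quantities obtained by a sign flip if needed) pins down $H_3$ of the translated triple, and controlling $T$ with the same properties finally bounds $J_3$.
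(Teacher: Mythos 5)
Your smooth-case argument is essentially the paper's: both split $|f-\widetilde p|\le |f-p|+|p-\widetilde p|$, observe that each coefficient difference $a_{ij}-\widetilde a_{ij}$ is a fixed constant times a power of $h$ times $M_3-J_3$, and combine Lemma \ref{orden3T} with Theorem \ref{teo:recolin}. For the discontinuous case you take a genuinely different route. The paper compares $\widetilde p$ with the degree-one truncation $p_1=a_{00}+a_{10}x+a_{01}(y-\frac{\sqrt{3}}{6}h)$ of the contaminated interpolant, asserts $|f-p_1|=O(h^2)$ on $S_R$, and then bounds $|p_1-\widetilde p|$ term by term using $|J_3|=O(1)$. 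You instead compare $\widetilde p$ with the full quadratic interpolant $\tilde p_{\mathrm{ext}}$ of a $C^3$ extension of $f$ from the smooth side, reduce the difference to the single perturbed datum $\widetilde f_E-\tilde f(E)=3h\bigl(J_3(\Delta_A,\Delta_C,\Delta_E)-M_3(\Delta_A,\Delta_C,\tilde\Delta_E)\bigr)=O(h)$, and multiply by the $O(1)$ cardinal function of the node $E$ on $S_R$. Your decomposition is arguably the more robust of the two: since $a_{00},a_{10},a_{01}$ all contain $f_E$ (for instance $a_{00}$ carries the term $-\frac19 f_E$), the paper's intermediate claim $|f-p_1|=O(h^2)$ is delicate to justify for genuinely discontinuous data, whereas your route isolates the jump entirely in the one modified value and only ever applies Theorem \ref{teo:recolin} to smooth data. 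Both arguments ultimately rest on the same point, which you correctly single out as the crux: that $|J_3(\Delta_A,\Delta_C,\Delta_E)|=O(1)$ when $\Delta_E=O(1/h)$. The paper cites Lemma \ref{minimo3T} for this, which presupposes $|T|=O(1)$ and $|\Delta_A+T|=O(1)$; property $6$ of Definition \ref{defTras} as literally stated (all arguments $O(h^{\alpha})$ with $\alpha\ge 0$) does not cover an $O(1/h)$ argument, so in either proof a short justification of the size of $T$ in this singular regime should be supplied.
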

\begin{proof}
Taking into account that $|M_{3}-J_{3}|=O(h^2)$ according to Lemma \ref{orden3T}, from (\ref{coeflinMean}) and (\ref{coefPPHtria}) we get that
\begin{equation}\label{difcoef}
\begin{array}{lll}
a_{00} - \widetilde{a}_{00}= O(h^3), & & \\
a_{10} - \widetilde{a}_{10}=O(h^2), & a_{01} - \widetilde{a}_{01}= O(h^2), & \\
a_{20} - \widetilde{a}_{20}=O(h), & a_{11} - \widetilde{a}_{11}= O(h), & a_{02} - \widetilde{a}_{02}= O(h).\\
\end{array}
\end{equation}
Now, from the expressions of the linear reconstruction $p(x,y)$ in (\ref{pollin}) and of the nonlinear reconstruction $\widetilde{p}(x,y)$ in (\ref{polPPH}) we easily obtain by applying
the triangular inequality that
\begin{eqnarray} \label{difpol}\notag
|p(x,y)-\widetilde{p}(x,y)|&\leq& |a_{00} -\widetilde{a}_{00}|+|a_{10} -\widetilde{a}_{10}| |x|+ |a_{01} -\widetilde{a}_{01}| |y-\frac{\sqrt{3}}{6}h|+ |a_{20} -\widetilde{a}_{20}||x|^2\\
&+& |a_{11} -\widetilde{a}_{11}||x||y-\frac{\sqrt{3}}{6}h|+ |a_{02} -\widetilde{a}_{02}||y-\frac{\sqrt{3}}{6}h|^2.
\end{eqnarray}
Thus, using (\ref{difcoef}) we reach to
\begin{equation} \label{difpol1}
|p(x,y)-\widetilde{p}(x,y)|=O(h^3).
\end{equation}
Applying Theorem \ref{teo:recolin} and (\ref{difpol1}) we have
\begin{equation*}
|f(x,y)-\widetilde{p}(x,y)|\leq |f(x,y)-p(x,y)|+|p(x,y)-\widetilde{p}(x,y)|=O(h^3),
\end{equation*}
which proves (\ref{ordenptildeSmooth}).\\
In order to prove (\ref{ordenptildeDis}) we start by pointing out that
\begin{equation}\label{teo:p1}
|f(x,y)-p_1(x,y)|=O(h^2), \ \forall (x,y) \in S_R,
\end{equation}
where $p_1(x,y)$ is given by
\begin{equation*}
p_1(x,y)=a_{00}+a_{10}x+a_{01}(y-\frac{\sqrt{3}}{6}h).
\end{equation*}
Now, taking into account that due to Lemma \ref{minimo3T}, $\ |J_{3}|=O(1),$ we have
\begin{eqnarray} \label{difpolp1}
|p_1(x,y)-\widetilde{p}(x,y)|&\leq& |a_{00} -\widetilde{a}_{00}|+|a_{10} -\widetilde{a}_{10}| |x|+ |a_{01} -\widetilde{a}_{01}| |y-\frac{\sqrt{3}}{6}h|+ |\widetilde{a}_{20}||x|^2\\ \notag
&+& |\widetilde{a}_{11}||x||y-\frac{\sqrt{3}}{6}h|+ |\widetilde{a}_{02}||y-\frac{\sqrt{3}}{6}h|^2=O(h)+O(1)O(h)+O(1)O(h)=O(h).
\end{eqnarray}
Thus
\begin{equation*}
|f(x,y)-\widetilde{p}(x,y)|\leq |f(x,y)-p_1(x,y)|+|p_1(x,y)-\widetilde{p}(x,y)|= O(h^2)+O(h)=O(h), \ \forall (x,y) \in S_R,
\end{equation*}
which finishes the proof.
\end{proof}
\begin{remark}
We have defined the reconstruction in equilateral triangles using the harmonic mean of three values, but the reconstruction can be extended to whatever triangle by defining adequate weights depending on the specific form of the triangle, expressing the
coefficients in terms of weighted arithmetic means instead, and then following the same track as in the given example.
\end{remark}
The ideas expressed in the presented new reconstruction operator can be extrapolated to higher dimensions, and into other fields of numerical analysis.
To finish this section, we present a simple numerical example that reinforces the theoretical results. Given the following two functions of two variables $f(x,y)$ and $g(x,y),$
\begin{equation*}
f(x,y):=\sin{(x+y)}+20, \qquad g(x,y):=\left\{\begin{array}{cc}
\sin{(x+y)}+20, & y<-\sqrt{3}(x-\frac{5}{8}),\\
\cos{(x+y)}+200, & y\geq -\sqrt{3}(x-\frac{5}{8}),\\
\end{array} \right.
\end{equation*}
defined in the triangle $T$ of vertices
$A (-h,\frac{\sqrt{3}}{2}h),$
$C (0,-\frac{\sqrt{3}}{2}h),$ $E (h,\frac{\sqrt{3}}{2}h),$ with $h=0.005,$ we consider the linear reconstruction $p(x,y)$ given by (\ref{pollin}) and the nonlinear reconstruction $\widetilde{p}(x,y)$ given by (\ref{polPPH}) inside the triangle $S_R$ represented in Figure \ref{fig:tria3}, and also the same kind of reconstructions, but in the triangles $S_Y$ and $S_G$ with sides of length a half and a quarter of the length of the sides of the original triangle $S_R.$ Then, we measure the errors and the approximation order of both linear and associated nonlinear method in two scenarios, i.e., with the smooth function $f(x,y)$ and with the function $g(x,y)$ which contains a jump discontinuity along the straight line $y=-\sqrt{3}(x-\frac{5}{8}).$
In Figure \ref{fig:tria3}, we see the domain of the considered functions and the representation of the reconstructions attained in the triangle $S_R$ by both methods. One can easily observe how the linear method produces the expected Gibb phenomena around the jump discontinuity, while the nonlinear method seems to avoid it. This fact can also be appreciated in the Table \ref{tab:exp1}, where we have measured the committed errors for the two reconstructions inside the triangle $S_G$ when building the reconstructions for the three triangles $S_R,$ $S_Y,$ and $S_G$ respectively. We have also included the numerical approximation order computed from these errors, i.e., we have approximated the numerical order $p$ by using
\begin{equation*}
p\approx \log_2\dfrac{E_{S_R}}{E_{S_Y}}, \quad \textrm{and} \quad p\approx\log_2\dfrac{E_{S_Y}}{E_{S_G}},
\end{equation*}
where $E_{S_R},$ $E_{S_Y},$ and $E_{S_G}$ stand for the approximation errors in infinity norm inside the triangle $S_G,$ attained by the considered reconstruction operators, builded using the information relative to the indicated triangle. In the case of dealing with a smooth function, we see that the nonlinear method imitates the good behavior of its linear counterpart. This point can be appreciated as much in Figure \ref{fig:tria3} as in Table \ref{tab:exp2}.
We would like to remark the fact that the obtained numerical orders coincide with the expected according to Theorem \ref{teo:recolin} and Theorem \ref{teo:recoPPH}. Also, it is remarkable the fact that the linear method completely loses any approximation order in case of the jump discontinuity and produces Gibbs effects, while these drawbacks are avoided with the proposed nonlinear method, attaining at least a first order approximation.
\begin{figure} \label{fig:tria3}
\centerline{\includegraphics[width=7cm]{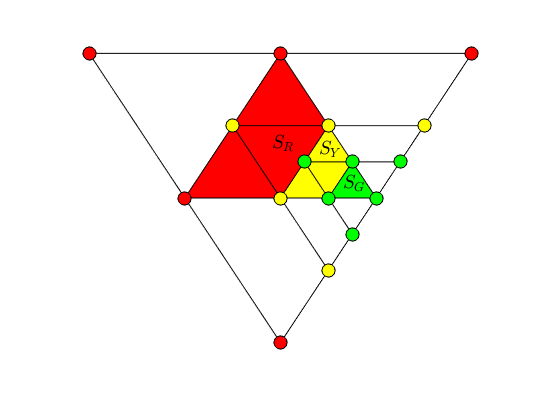}
\includegraphics[width=7cm]{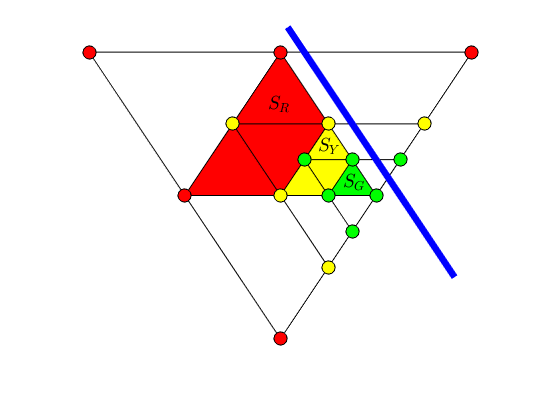}}
\centerline{\includegraphics[width=7cm]{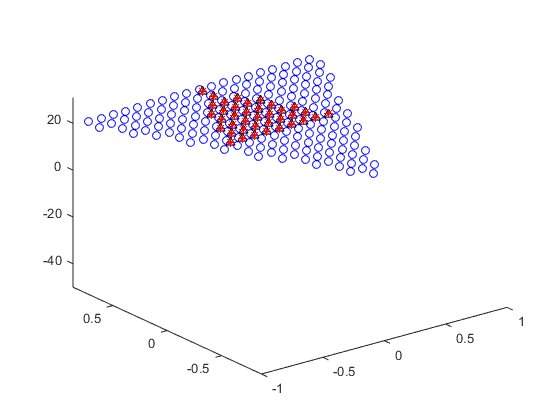}
\includegraphics[width=7cm]{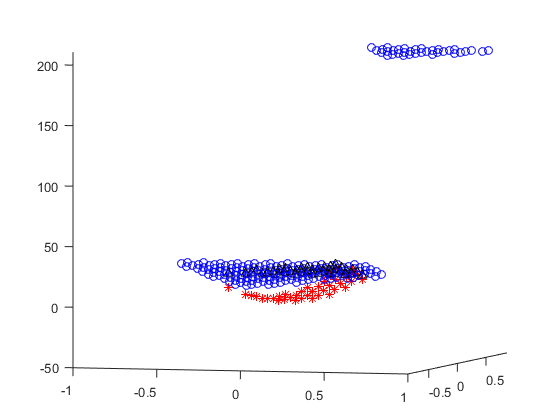}}
\caption{Top-left: Disposition of the considered domain to build the linear and nonlinear reconstructions inside the triangles $S_R,$ $S_Y,$ and $S_G.$ Top-right: Disposition of the considered domain affected by a jump discontinuity along the blue curve for which we build the linear and nonlinear reconstructions inside the triangles $S_R,$ $S_Y,$ and $S_G.$ Bottom-left: Obtained reconstructions, and comparison with the original smooth function $f(x,y)$ in the triangle $S_R.$ Bottom-right: Obtained reconstructions, and comparison with the original discontinuous function $g(x,y)$ in the triangle $S_R.$ With blue circles the original function, with red asterisks the linear reconstruction and with black triangles the new nonlinear reconstruction.}
\end{figure}
\begin{table}[!h]
\begin{center}
\begin{tabular}{|c|c|c|c|c|c|}
\hline
\multicolumn{3}{|c|}{$p(x,y)$} & \multicolumn{3}{c|}{$\widetilde{p}(x,y)$} \\
\hline
$Triangle$ & $||p(x,y)-f(x,y)||_\infty$ & $p$  & $Triangle$ & $||p(x,y)-f(x,y)||_\infty$ & $p$ \\
 \hline
$S_R$  & $16.9679$ & $-$  & $S_R$ & $7.0587\cdot 10^{-7}$ & $-$ \\
\hline
$S_Y$  & $22.6243$ & $-0.4151$  & $S_Y$ & $4.7168\cdot 10^{-7}$ & $0.5816$\\
\hline
$S_G$  & $22.6245$ & $-1.3679\cdot 10^{-5}$  & $S_G$ & $2.3545\cdot 10^{-7}$ & $1.0024$\\
\hline
\end{tabular}
\end{center}
\caption{Numerical approximation errors $||p(x,y)-g(x,y)||_{\infty}$ and $||\widetilde{p}(x,y)-g(x,y)||_{\infty}$  in infinity norm between the linear reconstruction and the original discontinuous function $g(x,y)$ and between the
 nonlinear reconstruction $\widetilde{p}(x,y)$ and the original discontinuous function $g(x,y)$ in the triangle $S_G$ for the cases of building the reconstructions inside the triangles $S_R,$ $S_Y$ and $S_G$ of decreasing side lengths. The approximation orders $p$ are also offered.} \label{tab:exp1}
\end{table}
\begin{table}[!h]
\begin{center}
\begin{tabular}{|c|c|c|c|c|c|}
\hline
\multicolumn{3}{|c|}{$p(x,y)$} & \multicolumn{3}{c|}{$\widetilde{p}(x,y)$} \\
\hline
$Triangle$ & $||p(x,y)-f(x,y)||_\infty$ & $p$  & $Triangle$ & $||p(x,y)-f(x,y)||_\infty$ & $p$ \\
 \hline
$S_R$  & $9.1721\cdot 10^{-9}$ & $-$  & $S_R$ & $8.9912\cdot 10^{-9}$ & $-$ \\
\hline
$S_Y$  & $1.6914\cdot 10^{-9}$ & $2.4390$  & $S_Y$ & $1.6687\cdot 10^{-9}$ & $2.4298$\\
\hline
$S_G$  & $2.1329\cdot 10^{-10}$ & $2.9874$  & $S_G$ & $2.1080\cdot 10^{-10}$ & $2.9848$\\
\hline
\end{tabular}
\end{center}
\caption{Numerical approximation errors $||p(x,y)-f(x,y)||_{\infty}$ and $||\widetilde{p}(x,y)-f(x,y)||_{\infty}$  in infinity norm between the linear reconstruction and the original smooth function $f(x,y)$ and between the
 nonlinear reconstruction $\widetilde{p}(x,y)$ and the original smooth function $f(x,y)$ in the triangle $S_G$ for the cases of building the reconstructions inside the triangles $S_R,$ $S_Y$ and $S_G$ of decreasing side lengths. The approximation orders $p$ are also offered.} \label{tab:exp2}
\end{table}

\section{Conclusions}\label{sec4}

In this article we have presented two relevant properties of the harmonic mean of three values that allow for new constructions of numerical methods, such as nonlinear reconstruction operators, subdivision and multiresolution
schemes, and solvers of hyperbolic conservation laws. We offer a clear and simple example on how to use these simple concepts to attain interesting and promising results in defining new reconstruction adapted methods. In fact, we have defined a new  reconstruction method for two dimensional functions which seems to avoid the Gibbs effects, and retains first order of approximation in the neighborhood of a jump discontinuity.

\clearpage

\end{document}